\numberwithin{equation}{section}
\theoremstyle{definition}
\newtheorem{dfn}{Definition}[section]
\theoremstyle{plain}
\newtheorem{thm}[dfn]{Theorem}
\newtheorem{lem}[dfn]{Lemma}
\title{The isomorphism problem for multiparameter
quantized Weyl algebras\footnote{The research of the first-named author was supported by National Science Foundation grant DMS-0800948.} 
\footnote{2010 \emph{Mathematics Subject Classification}. Primary 16W35; secondary 16S36.}}
\author{K. R. Goodearl \and J. T. Hartwig}
 \newcommand{\ga}{\gamma}\newcommand{\Ga}{\Gamma}
\newcommand{\ep}{\varepsilon}
 \newcommand{\la}{\lambda}
  \newcommand{\si}{\sigma}   \newcommand{\ph}{\varphi}
\newcommand{\iv}[2]{[\![ #1,#2 ]\!]}  
\newcommand{\MQWA}[4]{A_{#1}^{#2,#3}(#4)}         
\newcommand\Z{\mathbb{Z}}
\newcommand\N{\mathbb{N}}\newcommand\K{\mathbb{K}}
\DeclareMathOperator{\Span}{Span}
\begin{document}

\maketitle

\begin{abstract}
In this note we solve the isomorphism problem for the  multiparameter quantized Weyl algebras, in the case when none of the deformation parameters $q_i$ is a root of unity, over an arbitrary field.
\end{abstract}

\section{Introduction}
The \emph{isomorphism problem} for a family of algebraic structures given by generators and relations, is the problem of determining whether or not two members in the family are isomorphic. This is a very hard problem in general; for the family of finitely presented groups, the problem is known to be undecidable \cite{A}, \cite{Ra}.

Quantum algebras are various noncommutative associative algebras arising in the theory of quantum groups. They have many interesting properties from the point of view of ring theory (see e.g. \cite{Smith1992}, \cite{Joseph1995}, \cite{BroGoo2002}, and references therein). When it comes to isomorphisms, the general rule is that quantum algebras are very rigid in the sense of having few automorphisms \cite{GomKao2002}, \cite{N}, \cite{Yakimov2012a}, \cite{Yakimov2012b}. This is in stark contrast with the commutative counterparts, where determining automorphism groups can be a wild problem \cite{SheUmi2003}.

In this paper we consider a family of quantum algebras, called \emph{multiparameter quantized Weyl algebras}, introduced in \cite{M}. They have been studied in many papers and are related to $q$-difference operators \cite{Jordan1995},\cite{M}, multiparameter quantum groups \cite{M}, twisted generalized Weyl algebras \cite{MazTur2002}, \cite{Hartwig2006}, \cite{FH} and iterated skew polynomial rings \cite{GooLet2000}, \cite{C}.
Some ring theoretic aspects that have been studied for these algebra include
global and Krull dimension  \cite{GiaZha1995}, \cite{FujKirKuz1999},
prime and primitive spectra \cite{R}, \cite{GooLet2000}, \cite{Goodearl2000},
localizations and division rings of fractions \cite{AD}, \cite{Jordan1995}. The automorphism group of a multiparameter quantized Weyl algebra was determined in \cite{R} for generic parameters.  The isomorphism problem for quantized Weyl algebras $A_1^q(\K)$ of degree $1$, with arbitrary parameters $q$, was solved in \cite{Gad}.

Isomorphism problems have been studied for certain classes of \emph{generalized Weyl algebras} \cite{Bavula}, \cite{RS}, \cite{BavJor2001}. Multiparameter quantized Weyl algebras are not however examples of generalized Weyl algebras, but rather of the more general class of twisted generalized Weyl algebras \cite{MazTur2002} for which no general results exist about isomorphisms or automorphisms.
In \cite{AD} some necessary conditions are given for two multiparameter quantized Weyl algebras to be isomorphic.

In the present paper we solve the isomorphism problem for multiparameter quantized Weyl algebras by determining necessary and sufficient conditions for two such algebras to be isomorphic, in the case when certain parameters $q_i$ are not roots of unity. The tools we use are Jordan's simple localization \cite{Jordan1995} and Rigal's methods \cite{R} for determining automorphisms.

\subsection*{Notation} 
Throughout, we work over an arbitrary base field $\K$. For any integers $a$ and $b$, let $\iv{a}{b}$ denote the set $\big\{x\in\Z\mid a\le x\le b\big\}$.

\section{Multiparameter quantized Weyl algebras}

\begin{dfn} \cite{M}
Let $n$ be a positive integer, $Q=(q_1,\ldots,q_n)$ an $n$-tuple in $(\K^\ast)^n$,
and $\Ga=(\gamma_{ij})$ a multiplicatively skew-symmetric $n\times n$ matrix with entries from $\K^\ast$.
The \emph{multiparameter quantized Weyl algebra} $\MQWA{n}{Q}{\Ga}{\K}$ is
defined as the unital associative $\K$-algebra with generators
\[x_1,y_1,x_2,y_2,\ldots,x_n,y_n\]
subject to defining relations
\begin{align}
\label{eq:rel1}
y_iy_j&=\ga_{ij}y_jy_i, &&\forall i,j\in\iv{1}{n}\\
\label{eq:rel2}
x_ix_j &= q_i\ga_{ij} x_jx_i,&&i<j\\
\label{eq:rel3}
x_iy_j &= \ga_{ji} y_jx_i,&&i<j\\
\label{eq:rel4}
x_iy_j &= q_j\ga_{ji} y_jx_i,&& i>j\\
\label{eq:rel5}
x_jy_j-q_jy_jx_j &= 1 + \sum_{k=1}^{j-1} (q_k-1)y_kx_k, &&\forall j\in\iv{1}{n}.
\end{align}
\end{dfn}

Frequently used are the elements
\begin{equation}
z_i:= [x_i,y_i]=1+\sum_{k=1}^i (q_k-1)y_kx_k,\qquad \forall i\in\iv{1}{n}.
\end{equation}
They are normal elements (e.g., \cite[\S2.8]{Jordan1995}) and satisfy the following relations ($z_0=1$ by convention):
\begin{equation}
x_iy_i-q_iy_ix_i = z_{i-1},\quad\forall i\in\iv{1}{n}.
\end{equation}
As is well known, $\MQWA{n}{Q}{\Ga}{\K}$ is an iterated skew polynomial algebra of the form
$$
\MQWA{n}{Q}{\Ga}{\K} =
\K[y_1][x_1; \tau_2,\delta_2] [y_2; \tau_3][x_2; \tau_4,\delta_4]
\cdots [y_n; \tau_{2n-1}][x_n; \tau_{2n},\delta_{2n}]
$$
(e.g., \cite[\SS2.1, 2.8]{Jordan1995}). Consequently, $\MQWA{n}{Q}{\Ga}{\K}$ is a noetherian domain. The Gelfand-Kirillov dimension of this algebra is $2n$ \cite[Proposition 3.4]{GLen}.

\section{Height one prime ideals}

We let $\langle Q,\Ga\rangle$ denote the subgroup of of $\K^\ast$
generated by the set
\[\big\{q_1,\ldots,q_n\big\}\cup\big\{\gamma_{ij}\mid i,j\in\iv{1}{n}, \, i<j\big\}.\]
We say that a multiparameter quantized Weyl algebra $\MQWA{n}{Q}{\Ga}{\K}$ is \emph{generic}
if $\langle Q,\Ga\rangle$ is free abelian of (the maximal possible) rank $n(n+1)/2$.

Rigal proved in \cite[Proposition 3.2.3]{R} that if $\K$ is algebraically closed of characteristic zero and $\MQWA{n}{Q}{\Ga}{\K}$ is generic, then the height one prime ideals of $\MQWA{n}{Q}{\Ga}{\K}$ are the principal ideals generated by the $z_i$. Using results of Jordan \cite{Jordan1995}, we can improve this result by weakening the assumptions on $\K$, $Q$ and $\Gamma$, as follows

\begin{thm} \label{height1}
Let $\MQWA{n}{Q}{\Ga}{\K}$ be a multiparameter quantized Weyl algebra. Assume that for each $i\in\iv{1}{n}$, $q_i$ is not a root of unity. Then the set of height one prime ideals of $\MQWA{n}{Q}{\Ga}{\K}$ equals
\[\big\{(z_1),(z_2),\ldots,(z_n)\big\}.\]
\end{thm}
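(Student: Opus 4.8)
The plan is to localize and reduce to Jordan's analysis. The key structural fact is that the normal elements $z_i$ generate prime ideals and that inverting them produces a well-understood simple algebra. I would proceed as follows.

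First I would verify that each $(z_i)$ is indeed a height one prime. Since $z_i$ is a normal element (as recorded in the excerpt), the ideal $(z_i)=z_iA$ is a two-sided ideal, and by the principal ideal theorem it has height at most one; it is nonzero because $A=\MQWA{n}{Q}{\Ga}{\K}$ is a domain, so its height is exactly one provided the ideal is prime. To see primality, note that the normal element $z_i$ equals $1+\sum_{k=1}^i(q_k-1)y_kx_k$, and the quotient $A/(z_i)$ can be identified with a smaller multiparameter quantized Weyl-type algebra (or an Ore extension thereof) which is again a domain; thus $(z_i)$ is completely prime. This part should be routine, relying on the iterated skew polynomial description.

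\smallskip

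Next comes the heart of the argument: showing there are \emph{no other} height one primes. Here I would invoke Jordan's simple localization result from \cite{Jordan1995}. Set $Z$ to be the multiplicative set generated by the normal elements $z_1,\ldots,z_n$ (these commute up to scalars since they are normal, and one checks they form an Ore set). Jordan shows that the localization $A[Z^{-1}]$ is a simple algebra. The crucial consequence is that any height one prime $P$ must meet $Z$: for if $P\cap Z=\varnothing$, then $P$ would survive to a nonzero proper prime of the simple ring $A[Z^{-1}]$, a contradiction. Therefore every height one prime contains some product $z_{i_1}\cdots z_{i_r}$, hence (being prime) contains some single $z_i$. Since $(z_i)$ already has height one, minimality forces $P=(z_i)$. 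The condition that no $q_i$ is a root of unity is exactly what Jordan requires for the simplicity of the localization to hold, and it is the reason the hypotheses of \cite[Proposition 3.2.3]{R} can be weakened away from the algebraically closed, characteristic zero setting.

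\smallskip

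The main obstacle I anticipate is the second step, specifically confirming that the non-root-of-unity hypothesis on each $q_i$ suffices for Jordan's localization to be simple over an arbitrary field $\K$, without the genericity assumption on $\Ga$ used by Rigal. I would need to check that Jordan's simplicity criterion depends only on the $q_i$ and not on the skew-symmetric matrix $\Ga$; the relations \eqref{eq:rel1}--\eqref{eq:rel5} suggest that the $\ga_{ij}$ govern only the commutation twists among generators and do not enter the center-annihilating computation that drives simplicity. Once that independence is established, the argument that every height one prime is forced to contain a $z_i$ is clean. A secondary technical point is verifying that $\{z_1,\ldots,z_n\}$ generates an Ore set and that passing a prime through the localization preserves height, but these follow from standard localization theory for noetherian domains with normal denominators.
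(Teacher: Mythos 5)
Your proposal follows essentially the same route as the paper: primality of each $(z_i)$ via the iterated skew polynomial structure of the quotient (the paper makes the "routine" step precise by citing Jordan's Proposition 2.7 to see that $A_i/z_iA_i$ is a domain, where $A_i$ is the subalgebra generated by $x_1,y_1,\dots,x_i,y_i$), then Jordan's simple localization $A[Z^{-1}]$ to force every nonzero prime to contain some $z_i$, with the principal ideal theorem (or the incomparability of the $(z_i)$) settling the height. The concern you flag about $\Ga$-independence is resolved simply by the statement of Jordan's Theorem 3.2, which requires only that no $q_i$ be a root of unity.
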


\begin{proof} For each $i\in\iv{1}{n}$, the subalgebra of $A := \MQWA{n}{Q}{\Ga}{\K}$ generated by the elements $x_1,y_1,\dots, x_i,y_i$ is a multiparameter quantized Weyl algebra of the form $A_i := \MQWA{i}{Q^i}{\Ga^i}{\K}$, where $Q^i$ and $\Ga^i$ denote the restrictions of $Q$ and $\Ga$ to $\iv{1}{i}$ and $\iv{1}{i}^2$, respectively.
It follows from \cite[Proposition 2.7]{Jordan1995} that $z_i$ generates a completely prime ideal of $A_i$. In $A$, we have $(z_i) = z_i A$ by the normality of $z_i$, from which it follows that $A/(z_i)$ is an iterated skew polynomial ring over $A_i/ z_i A_i$, and thus is a domain. Consequently, $(z_i)$ is a completely prime ideal of $A$.

Next, let $Z$ be the multiplicative submonoid of $A$ generated by the normal elements $z_1,z_2,\ldots,z_n$. Jordan proved in \cite[Theorem 3.2]{Jordan1995} that the localization $A[Z^{-1}]$ is a simple ring. Consequently, any nonzero prime ideal $P$ of $A$ must meet $Z$, so $z_1^{k_1}z_2^{k_2}\cdots z_n^{k_n}\in P$ for some nonnegative integers $k_i$, not all equal to zero. Since $P$ is prime and $z_i$ is a normal element in $A$ for every $i$, it follows that $z_j \in P$ for some $j \in\iv{1}{n}$ and thus $(z_j) \subseteq P$. In particular, if $P$ has height one, then $P = (z_j)$.

Finally, since $(z_j) \not\subseteq (z_i)$ for any $i \ne j$, we conclude that no $(z_i)$ can properly contain a nonzero prime ideal, that is, $(z_i)$ has height one. (This also follows from the Noncommutative Principal Ideal Theorem \cite[Theorem 4.1.11]{McRob}.)
\end{proof}

\section{Automorphisms}

The automorphisms of $\MQWA{n}{Q}{\Ga}{\K}$ were determined by Rigal \cite[Th\'eor\`eme 4.2.5]{R} under the assumptions that $\K$ is algebraically closed of characteristic zero and $\MQWA{n}{Q}{\Ga}{\K}$ is generic. These hypotheses were mainly used in determining the height one prime ideals of $\MQWA{n}{Q}{\Ga}{\K}$. Theorem \ref{height1} can be used for this purpose, assuming only that the scalars $q_i$, for $i \in \iv{1}{n}$, are non-roots of unity. The remainder of Rigal's proof carries through as given in \cite{R}, yielding the following theorem.

\begin{thm}
\label{R4.2.5}
Let $\sigma$ be a $\K$-algebra automorphism of $\MQWA{n}{Q}{\Ga}{\K}$. If none of $q_1,\dots,q_n$ is a root of unity, there exists $(\mu_1,\dots,\mu_n) \in (\K^*)^n$ such that $\sigma(x_i) = \mu_i x_i$ and $\sigma(y_i) = \mu_i^{-1} y_i$ for all $i \in \iv{1}{n}$.
\end{thm}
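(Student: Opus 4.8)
The plan is to determine the action of $\sigma$ on the normal elements $z_i$ first, then deduce the action on the generators $x_i$ and $y_i$. Since $\sigma$ is an automorphism, it must permute the height one prime ideals of $A := \MQWA{n}{Q}{\Ga}{\K}$. By Theorem \ref{height1}, these are exactly the principal ideals $(z_1),\dots,(z_n)$, so $\sigma$ induces a permutation $\pi$ of $\iv{1}{n}$ with $\sigma\bigl((z_i)\bigr) = (z_{\pi(i)})$ for each $i$. Consequently $\sigma(z_i) = \lambda_i z_{\pi(i)}$ for some scalars $\lambda_i \in \K^*$. The first main task is to show that $\pi$ must be the identity permutation.

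To pin down $\pi$, I would exploit structural invariants attached to the chain $z_1, z_2, \dots, z_n$ that $\sigma$ must respect. The defining relations \eqref{eq:rel1}--\eqref{eq:rel5} and the normality of the $z_i$ give a collection of commutation scalars: conjugation by each $z_i$ acts on the generators (and on the other $z_j$) by specified elements of $\langle Q,\Ga\rangle$. Since $\sigma$ is an algebra automorphism, these commutation data are preserved, forcing the permutation $\pi$ to be compatible with the asymmetric relations among the $z_i$. The point here is that the $z_i$ are not symmetric with respect to one another: the formula $z_i = 1 + \sum_{k=1}^i (q_k-1)y_kx_k$ builds $z_i$ cumulatively, and the scalar relations distinguishing $z_i$ from $z_j$ break any potential symmetry, so the only permutation consistent with all of them is the identity. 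This is the step I expect to be the main obstacle, since it requires carefully tracking the group-theoretic constraints on $\langle Q,\Ga\rangle$ imposed by the non-root-of-unity hypothesis to rule out accidental coincidences of commutation scalars.

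Once $\pi = \mathrm{id}$ is established, so that $\sigma(z_i) = \lambda_i z_i$, I would lift this to the generators. Writing $z_i - z_{i-1} = (q_i - 1)y_i x_i$ (interpreting $z_0 = 1$), one sees that $\sigma$ scales each product $y_i x_i$ by the scalar governing $z_i$ and $z_{i-1}$; matching leading terms and using that $A$ is a domain shows $\sigma(y_i x_i) = \lambda_i' y_i x_i$ for appropriate $\lambda_i'$. From here I would analyze the action on the individual generators by considering the eigenspace structure: the relation $x_i y_i - q_i y_i x_i = z_{i-1}$ together with the commutation relations forces $\sigma(x_i)$ and $\sigma(y_i)$ to be scalar multiples of $x_i$ and $y_i$ respectively, say $\sigma(x_i) = \mu_i x_i$ and $\sigma(y_i) = \nu_i y_i$. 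This is the portion of the argument that Rigal carries out in \cite{R}, and it goes through verbatim once the height one primes are known.

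Finally, I would extract the constraint $\nu_i = \mu_i^{-1}$. Applying $\sigma$ to relation \eqref{eq:rel5} for $j = i$, namely $x_i y_i - q_i y_i x_i = z_{i-1}$, the left side transforms to $\mu_i\nu_i(x_i y_i - q_i y_i x_i) = \mu_i \nu_i z_{i-1}$, while by induction $\sigma(z_{i-1}) = z_{i-1}$ once we verify $\lambda_i = 1$ for all $i$. Comparing these, and using that $z_{i-1} \ne 0$ in the domain $A$, yields $\mu_i \nu_i = 1$, i.e. $\nu_i = \mu_i^{-1}$, as claimed. The base case $z_0 = 1$ anchors the induction, confirming $\sigma(z_0) = 1$ and propagating $\lambda_i = 1$ up the chain. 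This completes the determination of $\sigma$ in the stated form.
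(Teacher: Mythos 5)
Your overall skeleton matches the paper's: use Theorem \ref{height1} to see that $\sigma$ permutes the height one primes $(z_1),\dots,(z_n)$, hence $\sigma(z_i)=\lambda_i z_{\pi(i)}$, then descend to the generators. The genuine gap is precisely the step you yourself flag as the main obstacle: you never prove $\pi=\mathrm{id}$, and the mechanism you propose --- matching the ``commutation scalars'' by which conjugation by $z_i$ acts on the generators --- does not work as stated. Those scalars are attached to the generators $x_j,y_j$, which $\sigma$ need not preserve, so they are not a priori invariants of the pair consisting of the algebra and the ideal $(z_i)$; moreover the hypotheses permit $q_1=\dots=q_n$, in which case the nontrivial commutation scalars of the various $z_i$ all coincide and no asymmetry is visible at that level. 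To salvage an argument of this type you would need a genuine isomorphism invariant separating the $z_i$ (for instance the Gelfand--Kirillov dimension of the centralizer of $z_i$, or of $A/(z_i)$), together with a proof of the relevant computation; none of this appears. The paper instead follows Rigal: put the filtration of Lemma \ref{R4.2.2} on the target (total degree $1$ on $x_n,y_n$, degree $0$ on the other generators), apply $\sigma$ to $z_i-z_{i-1}=(q_i-1)y_ix_i$ with $i=\pi^{-1}(n)$ to see that $\sigma(y_i)\sigma(x_i)$ has degree $2$ and lies in $\Span_\K\{z_1,\dots,z_n\}$ up to constants, conclude from Lemma \ref{R4.2.2} that $(\sigma(x_i),\sigma(y_i))$ is $(\mu x_n,\nu y_n)$ or $(\mu y_n,\nu x_n)$, and note that if $i<n$ the same argument applied to $z_{i+1}-z_i$ would give $x_n$ two distinct preimages; hence $\pi(n)=n$, and one descends by induction through the subalgebras generated by $x_1,y_1,\dots,x_j,y_j$. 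This single device settles both the permutation and the form of $\sigma$ on the generators, and it is exactly the ingredient missing from your sketch.

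A second, smaller omission: even granting $\pi=\mathrm{id}$, the factorization argument only yields that $(\sigma(x_i),\sigma(y_i))$ is either $(\mu_i x_i,\nu_i y_i)$ or $(\mu_i y_i,\nu_i x_i)$. You assert that the first alternative is ``forced'' but do not rule out the swap; it is excluded by applying $\sigma$ to $x_iy_i-q_iy_ix_i=z_{i-1}$, which in the swapped case leaves a nonzero multiple of $(1-q_i^2)\,y_ix_i$ that would have to lie in the subalgebra generated by $x_1,y_1,\dots,x_{i-1},y_{i-1}$, forcing $q_i^2=1$ and contradicting the non-root-of-unity hypothesis. Your final step --- propagating $\lambda_0=1$ up the chain to obtain $\lambda_i=1$ and $\mu_i\nu_i=1$ --- is correct once these two points are supplied.
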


We shall also need to upgrade one of the lemmas used in the proof of the above theorem, \rm\cite[Lemme~4.2.2]{R}, under the same weakened hypotheses.

\begin{lem}
\label{R4.2.2}
Assume that none of $q_1,\dots,q_n$ is a root of unity.
Give $\MQWA{n}{Q}{\Ga}{\K}$ an $\N$-filtration by specifying total degrees
\begin{align*}
d(x_i) &=d(y_i)= 0,&&\forall i\in\iv{1}{n-1},\\
d(x_n) &=d(y_n)= 1.&&
\end{align*}
If $a,b\in A_n^{Q,\Ga}(\K)\setminus\K$ such that
$d(ab)=2$ and $ab\in\Span_\K\{z_1,\ldots,z_n\}$, then
either $(a,b)=(\mu x_n, \nu y_n)$ or $(a,b)=(\mu y_n, \nu x_n)$ for some $\mu,\nu\in\K^\ast$.
\end{lem}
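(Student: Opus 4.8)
The plan is to work in the associated graded ring $\operatorname{gr}A$ of $A:=\MQWA{n}{Q}{\Ga}{\K}$ with respect to the given filtration, and then bootstrap information about leading terms back to $a$ and $b$ themselves. First I would record the relevant structure: the degree-$0$ component $F_0$ is exactly the subalgebra $A_{n-1}:=\MQWA{n-1}{Q^{n-1}}{\Ga^{n-1}}{\K}$, and in $\operatorname{gr}A$ the relation \eqref{eq:rel5} for $j=n$ degenerates (its right-hand side $z_{n-1}$ lies in $F_0$) to $x_ny_n=q_ny_nx_n$, while all other relations are inherited unchanged. Thus $\operatorname{gr}A$ is an iterated Ore extension $A_{n-1}[y_n;\cdot][x_n;\cdot]$ over the domain $A_{n-1}$, hence itself a domain, graded by $(\operatorname{gr}A)_d=\bigoplus_{i+j=d}A_{n-1}\,y_n^i x_n^j$. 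I would also note that $z_i\in A_{n-1}$ for $i<n$ while $z_n$ has leading term $(q_n-1)y_nx_n$; consequently any degree-$2$ element of $\Span_\K\{z_1,\dots,z_n\}$ has leading term a nonzero scalar multiple $\lambda y_nx_n$, and -- crucially -- involves \emph{no} PBW monomials of $(y_n,x_n)$-bidegree $(1,0)$ or $(0,1)$.

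Since $\operatorname{gr}A$ is a domain, filtration degrees add, so $d(a)+d(b)=d(ab)=2$. The next step is to rule out $d(a)\in\{0,2\}$. If $d(a)=0$ then $a\in A_{n-1}\setminus\K$, and writing $\bar b=c_2y_n^2+c_{11}y_nx_n+c_0x_n^2$ the leading-term identity $a\bar b=\lambda y_nx_n$ forces $ac_2=ac_0=0$ and $ac_{11}=\lambda\in\K^\ast$; as $A_{n-1}$ is a domain this makes $a$ a left divisor of a unit, hence a unit. But the units of the iterated Ore extension $A_{n-1}$ over $\K$ are just $\K^\ast$ (degrees are additive in each Ore variable, so units have degree $0$ in every variable), contradicting $a\notin\K$. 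The case $d(b)=0$ is symmetric via the right-module basis of $(\operatorname{gr}A)_2$. Hence $d(a)=d(b)=1$.

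With $d(a)=d(b)=1$ I would write the leading terms as $\bar a=\alpha y_n+\beta x_n$ and $\bar b=\gamma y_n+\delta x_n$ with $\alpha,\beta,\gamma,\delta\in A_{n-1}$, expand $\bar a\bar b$ using the normality of $x_n,y_n$ in $\operatorname{gr}A$ (moving each past an element of $A_{n-1}$ introduces an automorphism, and $x_ny_n=q_ny_nx_n$), and match coefficients of $y_n^2,\,y_nx_n,\,x_n^2$ against $\lambda y_nx_n$. This yields $\alpha\sigma_y(\gamma)=0$, $\beta\sigma_x(\delta)=0$, together with a nonzero cross-term in the $y_nx_n$ slot. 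The domain property then leaves exactly two viable branches: $\bar a=\beta x_n,\ \bar b=\gamma y_n$ with $\beta,\gamma\in\K^\ast$, or $\bar a=\alpha y_n,\ \bar b=\delta x_n$ with $\alpha,\delta\in\K^\ast$ (the remaining two branches force $\bar a=0$ or $\bar b=0$). So up to scalars the leading pair is $(x_n,y_n)$ or $(y_n,x_n)$.

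Finally I would eliminate the lower-order terms. In the first branch write $a=\mu x_n+a_0$ and $b=\nu y_n+b_0$ with $\mu,\nu\in\K^\ast$ and $a_0,b_0\in A_{n-1}$, and expand $ab$ in the PBW basis using $x_ny_n=q_ny_nx_n+z_{n-1}$ and the normality of $x_n,y_n$ in $A$. The bidegree-$(0,1)$ part of $ab$ is $\nu a_0y_n$ and the bidegree-$(1,0)$ part is $\mu\sigma(b_0)x_n$; but $ab\in\Span_\K\{z_1,\dots,z_n\}$ contains no monomials of these bidegrees, so $a_0=b_0=0$ and $(a,b)=(\mu x_n,\nu y_n)$. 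The second branch gives $(a,b)=(\mu y_n,\nu x_n)$ by the symmetric computation. The main obstacle throughout is the noncommutativity of the coefficient ring $A_{n-1}$: one must verify that $\operatorname{gr}A$ is a domain with the stated graded free-module structure, determine its units, and carefully track the normality automorphisms in both the leading-term and the full PBW expansions. I expect the bidegree-matching in the last step to be the cleanest lever, since it converts the membership $ab\in\Span_\K\{z_i\}$ directly into the vanishing of $a_0$ and $b_0$.
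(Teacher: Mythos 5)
Your argument is correct and complete. Note that the paper itself gives no proof of this lemma: it is stated as an upgrade of Rigal's Lemme 4.2.2, with the assertion that the original proof carries over under the weakened hypotheses. Your proof via the associated graded ring is exactly the natural formalization of that filtration argument: $\operatorname{gr}A\cong A_{n-1}[y_n;\tau][x_n;\tau']$ is the same iterated Ore extension as $A$ but with the skew derivation $\delta_{2n}$ (which sends $y_n\mapsto z_{n-1}$ and hence lowers degree) killed, so it is a domain with the stated free $A_{n-1}$-bimodule grading; degree additivity then forces $d(a)=d(b)=1$ once you rule out the $(0,2)$ and $(2,0)$ cases via the fact that the units of $A_{n-1}$ are $\K^\ast$; and the final elimination of the degree-zero tails $a_0,b_0$ by matching $(y_n,x_n)$-bidegrees against $\Span_\K\{z_1,\dots,z_n\}$ (whose elements only involve bidegrees $(0,0)$ and $(1,1)$) is exactly the right lever. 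The only ingredients worth stating explicitly in a write-up are that $x_n$ and $y_n$ normalize the subalgebra $A_{n-1}$ (which justifies both the left and right module decompositions and the automorphisms $\tau,\tau'$ you invoke), and that $q_n\neq 1$ guarantees the leading term $c_n(q_n-1)y_nx_n$ is nonzero.
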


\section{Solution to the isomorphism problem}

\begin{thm}
Let $\MQWA{n}{Q}{\Ga}{\K}$ and $\MQWA{m}{Q'}{\Ga'}{\K}$ be two multiparameter quantized Weyl algebras, with standard generators $x_i$, $y_i$ and $x'_i$, $y'_i$, respectively.
\begin{enumerate}[{\rm (I)}]
\item Assume that  $q_1,\ldots,q_n$ and $q_1',\ldots,q_m'$ are not roots of unity. Then 
$\MQWA{n}{Q}{\Ga}{\K}$ and $\MQWA{m}{Q'}{\Ga'}{\K}$ are isomorphic as $\K$-algebras if and only if the following two conditions hold:
\begin{enumerate}[{\rm (i)}]
\item $m=n$;
\item There exists a sign vector $\ep\in\{1,-1\}^n$ such that
for all $i\in\iv{1}{n}$,
\begin{equation} \label{eq:Q'_cond}
 q_i'= q_i^{\ep_i};
\end{equation}
and for all $i,j\in\iv{1}{n}$ with $i<j$,
\begin{equation} \label{eq:Ga'_cond}
\gamma'_{ij} =
\begin{cases}
\gamma_{ij}, & \text{if $(\ep_i,\ep_j)=(1,1)$},\\
\gamma_{ji}, & \text{if $(\ep_i,\ep_j)=(-1,1)$},\\
q_i^{-1}\gamma_{ji}, & \text{if $(\ep_i,\ep_j)=(1,-1)$},\\
q_i\gamma_{ij}, & \text{if $(\ep_i,\ep_j)=(-1,-1)$}.
\end{cases}
\end{equation}
\end{enumerate}
\item If conditions {\rm(i)} and {\rm(ii)} above hold, then for any $\mu\in(\K^\ast)^n$
and $\ep \in \{\pm1\}^n$, there exists a unique $\K$-algebra isomorphism 
\[\ph_{\mu,\ep} :\MQWA{n}{Q}{\Ga}{\K}\longrightarrow\MQWA{m}{Q'}{\Ga'}{\K}\]
such that
\begin{align*}
\ph_{\mu,\ep}(x_i) &= \begin{cases} \mu_i x_i',& \ep_i=1\\ \mu_i y_i',& \ep_i=-1\end{cases}\\
\ph_{\mu,\ep}(y_i) &= \begin{cases} \la_i\mu_i^{-1} y_i',& \ep_i=1\\ -\la_i\mu_i^{-1} x_i',& \ep_i=-1\end{cases}
\end{align*}
for all $i \in \iv{1}{n}$,
where $\la=\la(\ep)\in(\K^\ast)^n$ is defined by the recursion relation
\begin{equation*}
\la_i = q^{(\ep_i-1)/2}\la_{i-1},\quad \la_0=1.
\end{equation*}
Finally, $(\mu,\ep) \longmapsto \ph_{\mu,\ep}$ defines a bijection between $(\K^\ast)^n \times \{\pm1\}^n$ and
the set of isomorphisms
$\MQWA{n}{Q}{\Ga}{\K}\longrightarrow\MQWA{m}{Q'}{\Ga'}{\K}$.
\end{enumerate}
\end{thm}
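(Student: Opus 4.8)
The plan is to establish part (II) first and read off part (I) as a consequence, since both directions of the ``iff'' in (I) are controlled by the explicit maps $\ph_{\mu,\ep}$. I would begin by verifying that, granting (i) and (ii), each prescribed assignment $\ph_{\mu,\ep}$ extends to a $\K$-algebra homomorphism. A homomorphism is pinned down by the images of the generators, so uniqueness is automatic; the content is well-definedness, i.e.\ that the images satisfy relations \eqref{eq:rel1}--\eqref{eq:rel5} of $\MQWA{n}{Q}{\Ga}{\K}$ when computed inside $\MQWA{m}{Q'}{\Ga'}{\K}$. Relations \eqref{eq:rel1}--\eqref{eq:rel4} split into four cases according to $(\ep_i,\ep_j)\in\{\pm1\}^2$, and in each case a direct substitution converts the required $\ga_{ij}$-relation into one of the four $\ga'$-relations of the target; this is exactly what forces formula \eqref{eq:Ga'_cond}. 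The essential point is relation \eqref{eq:rel5}, for which I would prove by induction on $i$ the identity $\ph_{\mu,\ep}(z_i)=\la_i z_i'$, using $z_i=z_{i-1}+(q_i-1)y_ix_i$ and $\la_i=q_i^{(\ep_i-1)/2}\la_{i-1}$. The case $\ep_i=-1$ is where the sign in $\ph_{\mu,\ep}(y_i)$ and the factor $q_i^{-1}$ in $\la_i$ conspire: after rewriting $x_i'y_i'$ via $x_i'y_i'-q_i'y_i'x_i'=z_{i-1}'$ together with $q_i'=q_i^{-1}$, the induction closes up to give $\ph_{\mu,\ep}(z_i)=\la_i z_i'$ once more. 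Granting this identity, relation \eqref{eq:rel5} reduces to $q_i'=q_i^{\ep_i}$, which is \eqref{eq:Q'_cond}. Bijectivity of $\ph_{\mu,\ep}$ is then immediate: its image contains scalar multiples of every $x_i'$ and $y_i'$, so it is onto, and running the same construction back from $\MQWA{m}{Q'}{\Ga'}{\K}$ produces a two-sided inverse of the same form.

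For the converse --- that (i),(ii) are necessary and that \emph{every} isomorphism arises this way --- let $\psi$ be an isomorphism. Since the Gelfand--Kirillov dimension of $\MQWA{n}{Q}{\Ga}{\K}$ is $2n$, comparing dimensions gives $m=n$, which is (i). By Theorem \ref{height1} the height one primes of the two algebras are precisely the $(z_i)$ and the $(z_i')$, and $\psi$ carries the first set bijectively onto the second; as these algebras are filtered domains whose only units are nonzero scalars, this yields a permutation $\pi\in S_n$ and scalars $c_i\in\K^\ast$ with $\psi(z_i)=c_i z_{\pi(i)}'$. I would then follow Rigal's argument for Theorem \ref{R4.2.5}, now using Theorem \ref{height1} and Lemma \ref{R4.2.2} as the available inputs in place of his generic, characteristic-zero versions. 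Working with the $\N$-filtration of Lemma \ref{R4.2.2}, one peels off the top index: the factorization $z_n-z_{n-1}=(q_n-1)y_nx_n$ shows $\psi(y_n)\psi(x_n)\in\Span_\K\{z_1',\dots,z_n'\}$, and a degree count puts this product in filtration degree $2$, so Lemma \ref{R4.2.2} (applied in the target) forces $\{\psi(x_n),\psi(y_n)\}$ to be $\{\mu_n x_n',\,\nu y_n'\}$ as an unordered pair, the two orderings being recorded by the sign $\ep_n$. Matching the degree-$2$ component then forces $\pi(n)=n$; one descends to the subalgebra on the remaining indices and induces on $n$. This simultaneously gives $\pi=\mathrm{id}$ and produces $\mu\in(\K^\ast)^n$, $\ep\in\{\pm1\}^n$ with $\psi=\ph_{\mu,\ep}$.

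With the form of $\psi$ in hand, feeding it back into relations \eqref{eq:rel1}--\eqref{eq:rel5} forces \eqref{eq:Q'_cond} and \eqref{eq:Ga'_cond}, exactly by the four-case computation used for well-definedness, which completes (I). The bijectivity statement in (II) is then assembled: surjectivity of $(\mu,\ep)\mapsto\ph_{\mu,\ep}$ onto the set of isomorphisms is the previous paragraph, while injectivity is immediate because $x_i'$ and $y_i'$ are linearly independent in the target, so the value $\ph_{\mu,\ep}(x_i)$ determines $\ep_i$ and then $\mu_i$.

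I expect the main obstacle to be the rigidity of the indexing: proving that no genuine reindexing is possible, i.e.\ that the permutation $\pi$ must be the identity and that $\psi$ matches up the pairs $\{x_i,y_i\}$ and $\{x_i',y_i'\}$ index by index. The only available asymmetry is that in relation \eqref{eq:rel5} index $j$ interacts solely with the indices $k<j$; this is precisely what the filtration of Lemma \ref{R4.2.2} is designed to detect, and carrying out the inductive peeling from the top index carefully --- keeping track of the scalars $c_i$ and of the two orientations recorded by $\ep_n$ --- is the delicate part of the argument.
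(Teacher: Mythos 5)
Your overall route is the paper's route: Gelfand--Kirillov dimension gives $m=n$; Theorem \ref{height1} gives $\psi(z_i)=c_i z'_{\pi(i)}$ for a permutation $\pi$; the filtration of Lemma \ref{R4.2.2} is used to peel off the top index and force $\pi=\mathrm{id}$ together with $(\psi(x_i),\psi(y_i))\in\{(\mu_i x_i',\nu_i y_i'),(\mu_i y_i',\nu_i x_i')\}$; and the four-case substitution into relations \eqref{eq:rel1}--\eqref{eq:rel5} yields \eqref{eq:Q'_cond} and \eqref{eq:Ga'_cond} and, conversely, the well-definedness of $\ph_{\mu,\ep}$. The only genuinely different choices are organizational (you do sufficiency first) and your proof of injectivity of $\ph_{\mu,\ep}$ by exhibiting an inverse of the same shape, where the paper instead invokes that a surjection between domains of equal finite GK dimension is injective; both work, though yours requires verifying that conditions (i),(ii) are symmetric in $(Q,\Ga)\leftrightarrow(Q',\Ga')$ with the same $\ep$ and that the scalars compose to $1$.

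There is one step that fails as written: you assert that ``a degree count puts $\psi(y_n)\psi(x_n)$ in filtration degree $2$.'' From $z_n-z_{n-1}=(q_n-1)y_nx_n$ you only get $\psi(y_n)\psi(x_n)=(q_n-1)^{-1}\bigl(c_n z'_{\pi(n)}-c_{n-1}z'_{\pi(n-1)}\bigr)$, and if neither $\pi(n)$ nor $\pi(n-1)$ equals $n$ this element has degree $0$, so Lemma \ref{R4.2.2} does not apply; concluding $\pi(n)=n$ from ``matching the degree-$2$ component'' is circular. The paper avoids this by starting instead from $i=\pi^{-1}(n)$, the one index where the product $\psi(y_i)\psi(x_i)$ is \emph{guaranteed} to have degree $2$ (since $\psi(z_i)=c_iz'_n$ while $\psi(z_{i-1})$ has degree $0$), applying Lemma \ref{R4.2.2} there, and then ruling out $i<n$ by observing that the same argument applied to $z_{i+1}-z_i$ would give $x'_n$ two distinct preimages. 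You flagged the rigidity of the indexing as the delicate point, but the actual mechanism --- locate $\pi^{-1}(n)$ first, then derive a contradiction from injectivity if it is not $n$ --- is missing from your outline and needs to be supplied.
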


\begin{proof} Set $A := \MQWA{n}{Q}{\Ga}{\K}$ and $A' := \MQWA{m}{Q'}{\Ga'}{\K}$, and set $z'_i := [x'_i,y'_i] \in A'$ for $i \in \iv{1}{m}$, as well as $z'_0 = 1 \in A'$. For $i \in \iv{1}{n}$, write $A_i$ for the subalgebra of $A$ generated by $x_1,y_1, \dots, x_i,y_i$, and write $A'_j$ for the analogous subalgebras of $A'$.

(I): Suppose $\varphi: A\to A'$ is an isomorphism of $\K$-algebras.
Since the Gelfand-Kirillov dimensions of $A$ and $A'$ are $2n$ and $2m$, it is immediate that $m=n$.
Now $\varphi$ must map the set of height one prime ideals of $A$ bijectively onto the corresponding set for $A'$. In view of Theorem \ref{height1}, we conclude that there
exist $\la\in(\K^\ast)^n$ and a permutation $\si\in S_n$ such that
\begin{equation*}
\varphi(z_i)=\la_i z_{\si(i)}',\quad\forall i\in\iv{1}{n}.
\end{equation*}

Give $A'$ the $\N$-filtration specified in Lemma \ref{R4.2.2}.
Let $i=\si^{-1}(n)$. By definition of $z_i$ we have $z_i-z_{i-1}=(q_i-1)y_ix_i$. Applying $\varphi$ to both sides gives
\[ \la z_n' - \varphi(z_{i-1}) = (q_i-1)\varphi(y_i)\varphi(x_i), \]
which implies
\[ \la (q_n'-1)y_n'x_n' + c = (q_i-1)\varphi(y_i)\varphi(x_i) \]
where $d(c)=0$, and so $d( \varphi(y_i)\varphi(x_i) ) = 2$. Now we use Lemma \ref{R4.2.2}.
We obtain that for $i=\si^{-1}(n)$, 
\begin{equation}\label{eq:yixi1}
\big(\varphi(x_i),\varphi(y_i)\big)\in\big\{
(\mu x_n', \nu y_n'), \, (\mu y_n', \nu x_n')\mid \mu,\nu\in\K^\ast \big\}.
\end{equation}

Suppose $i<n$. Then we can apply the same argument as above
to the relation $z_{i+1}-z_i=(q_{i+1}-1)y_{i+1}x_{i+1}$
and obtain
\begin{equation}\label{eq:yixi2}
\big(\varphi(x_{i+1}),\varphi(y_{i+1})\big)\in\big\{
(\mu x_n', \nu y_n'), \, (\mu y_n', \nu x_n')\mid \mu,\nu\in\K^\ast \big\}.
\end{equation}
But then \eqref{eq:yixi1} and \eqref{eq:yixi2} imply that $x_n'$ has two different pre-images under $\varphi$,
which is obviously impossible. Therefore $i=n$. So
$\varphi(z_n)=\la_n z_n'$ and
\begin{equation}\label{eq:xnyn}
\big(\varphi(x_n),\varphi(y_n)\big)\in\big\{
(\mu x_n', \nu y_n'), \, (\mu y_n', \nu x_n')\mid \mu,\nu\in\K^\ast \big\}.
\end{equation}
In particular, for $j<n$ we have $d(\varphi(y_j)\varphi(x_j))=d(\varphi(z_j) - \varphi(z_{j-1}))=0$ which shows that $\varphi(x_j),\varphi(y_j)\in A'_{n-1}$.
So $\varphi (A_{n-1})\subseteq A'_{n-1}$. Hence, applying the argument
leading to \eqref{eq:xnyn} to $\varphi|_{A_{j}}$ for $j=n-1,n-2,\ldots,1$ we obtain
\begin{gather*}
\varphi(z_i)=\la_i z_i',\quad\forall i\in\iv{1}{n},\\
\label{eq:xiyi}
\big(\varphi(x_i),\varphi(y_i)\big)\in\big\{
(\mu_i x_i', \nu_i y_i'), \, (\mu_i y_i', \nu_i x_i')\mid \mu_i,\nu_i\in\K^\ast \big\},\quad\forall i\in\iv{1}{n}.
\end{gather*}

We define $\ep\in\{1,-1\}^n$ by
\begin{equation*}\label{eq:ep_def}
\ep_i =
\begin{cases}
1,&\text{if $(\varphi(x_i),\varphi(y_i))=(\mu_i x_i', \nu_i y_i')$ for some $\mu_i,\nu_i\in\K^\ast$,}\\
-1,&\text{if $(\varphi(x_i),\varphi(y_i))=(\mu_i y_i', \nu_i x_i')$ for some $\mu_i,\nu_i\in\K^\ast$.}
\end{cases}
\end{equation*}
Next we prove that \eqref{eq:Q'_cond} holds. Applying $\varphi$ to the relation
$x_i y_i - q_i y_i x_i = z_{i-1}$ we obtain
\begin{align*}
\mu_i \nu_i (x_i' y_i'- q_i y_i'x_i') &= \la_{i-1} z_{i-1}', \qquad \ep_i =1, \\
\mu_i \nu_i (y_i' x_i'- q_i x_i'y_i') &= \la_{i-1} z_{i-1}', \qquad \ep_i =-1.
\end{align*}
Using $x_i' y_i'- q_i' y_i' x_i' = z_{i-1}'$, we get
\begin{gather*}
\mu_i \nu_i (q_i' - q_i) y_i'x_i'   \in A'_{i-1}, \qquad\ep_i=1, \\
\mu_i \nu_i ((q_i')^{-1}- q_i) x_i'y_i' \in A'_{i-1}, \qquad\ep_i=-1.
\end{gather*}
Statement \eqref{eq:Q'_cond} follows immediately.

Let $i,j\in\iv{1}{n}$. Assume that $i<j$.
We apply $\varphi$ to relation \eqref{eq:rel1} to obtain
\begin{align*} 0 &= (\nu_i\nu_j)^{-1}\big(\varphi(y_i)\varphi(y_j)-\ga_{ij}\varphi(y_j)\varphi(y_i)\big) \\
 &= \begin{cases}
y_i'y_j'-\ga_{ij} y_j'y_i'=(\ga_{ij}'-\ga_{ij})y_j'y_i'& (\ep_i,\ep_j)=(1,1)\\
x_i'y_j'-\ga_{ij} y_j'x_i'=(\ga_{ji}'-\ga_{ij})y_j'x_i'& (\ep_i,\ep_j)=(-1,1)\\
y_i'x_j'-\ga_{ij} x_j'y_i'=(1-\ga_{ij}q_i'\ga_{ij}')y_i'x_j' & (\ep_i,\ep_j)=(1,-1)\\
x_i'x_j'-\ga_{ij} x_j'x_i'=(q_i'\ga_{ij}'-\ga_{ij} )x_j'x_i' & (\ep_i,\ep_j)=(-1,-1).
\end{cases}
\end{align*}
Combining this with \eqref{eq:Q'_cond}, we obtain
\eqref{eq:Ga'_cond}. This proves the necessity of
conditions (i), (ii).

Conversely, assume that (i), (ii) hold. Define elements
\begin{equation*}
x_i^* = \begin{cases} x_i',& \ep_i=1\\ y_i',& \ep_i=-1\end{cases} \qquad\text{and}\qquad
y_i^* = \begin{cases} \la_i y_i',& \ep_i=1\\ -\la_i x_i',& \ep_i=-1\end{cases}
\end{equation*}
in $A$ for $i \in \iv{1}{n}$, 
where $\la=\la(\ep)\in(\K^\ast)^n$ is defined by the recursion relation
\begin{equation*}\label{eq:lambda_rec}
\la_i = q_i^{(\ep_i-1)/2}\la_{i-1},\quad \la_0=1.
\end{equation*}
It is straightforward to verify that these elements satisfy the relations \eqref{eq:rel1}-\eqref{eq:rel4}. As for \eqref{eq:rel5}, note that this is equivalent to
\begin{gather}
\label{2.5.1}
x^*_1 y^*_1 - q_1 y^*_1 x^*_1 = 1,  \\
\begin{gathered}
\label{2.5.j}
(x^*_j y^*_j - q_j y^*_j x^*_j) - (x^*_{j-1} y^*_{j-1} - q_{j-1} y^*_{j-1} x^*_{j-1}) = (q_{j-1}-1) y^*_{j-1} x^*_{j-1}, \\ \forall j \in \iv{2}{n},
\end{gathered}
\end{gather}
and that \eqref{2.5.j} is equivalent to $x^*_j y^*_j - q_j y^*_j x^*_j = [x^*_{j-1}, y^*_{j-1}]$. For both choices of $\ep_{j-1}$, the commutator $[x^*_{j-1}, y^*_{j-1}]$ equals $\la_{j-1} [x'_{j-1}, y'_{j-1}]$. Consequently, \eqref{2.5.1} and \eqref{2.5.j} will both follow from
\begin{equation}
\label{2.5.1j}
x^*_j y^*_j - q_j y^*_j x^*_j = \la_{j-1} z'_{j-1}, \qquad \forall j \in \iv{1}{n}.
\end{equation}
It is straightforward to verify \eqref{2.5.1j}.

Since the elements $x^*_i$, $y^*_i$ satisfy the relations \eqref{eq:rel1}-\eqref{eq:rel5}, there is a unique $\K$-algebra homomorphism $\varphi: A \to A'$ such that
$$
\varphi(x_i) = x^*_i \qquad\text{and}\qquad \varphi(y_i) = y^*_i, \qquad \forall i\in \iv{1}{n}.
$$
It is clearly surjective, since its image contains $x'_i$ and $y'_i$ for all $i \in \iv{1}{n}$. Injectivity follows because $A$ and $A'$ are domains with the same finite Gelfand-Kirillov dimension (see \cite[Proposition 3.15]{KL}).

(II): This follows from the proof of part (I) and Theorem \ref{R4.2.5}.
\end{proof}

\noindent Department of Mathematics, University of California, Santa Barbara, CA 93106, USA

\noindent goodearl@math.ucsb.edu
\smallskip

\noindent Department of Mathematics, University of California, Riverside, CA 92521, USA

\noindent hartwig@math.ucr.edu


\begin{thebibliography}{99}


\bibitem{A}
   S. I. Adian,
   \emph{On algorithmic problems in effectively complete classes of groups}, (Russian) Dokl. Akad. SSSR \textbf{123} (1958), 13--16.


\bibitem{AD}
   J. Alev, F. Dumas,
   \emph{Sur le corps des fractions de certaines alg\`ebres quantiques},
   J. Algebra \textbf{170} (1994), 229--265.

\bibitem{N}
   N. Andruskiewitsch,
   \emph{On the automorphisms of $U_q(g)$},
   in ``Quantum groups'', IRMA Lectures in Mathematics and Theoretical Physics 12,
    European Mathematical Society (EMS), Z\"urich, 2008, pp.~107--133.

\bibitem{Bavula}
   V. V. Bavula,
   \emph{The finite-dimensionality of Extn’s and Torn’s of simple modules over a class of algebras},
   Funktsional. Anal. i Prilozhen. \textbf{25} (1991), 80--82.

\bibitem{BavJor2001}
   V. V. Bavula, D. A. Jordan,
   \emph{Isomorphism problems and groups of automorphisms for generalized Weyl algebras},
   Trans. Amer. Math. Soc. \textbf{353} (2001), 769--794.

\bibitem{BroGoo2002}
   K. A. Brown, K. R. Goodearl,
   \emph{Lectures on Algebraic Quantum Groups},
   Birkh\"auser Verlag, Basel, 2002.

\bibitem{C}
   G. Cauchon,
   \emph{Effacement des d\'{e}rivations et spectres premiers des alg\`{e}bres quantiques},
   J. Algebra \textbf{260} (2003), 476--518.

\bibitem{FujKirKuz1999}
   H. Fujita, E. Kirkman, J. Kuzmanovich,
   \emph{Global and Krull dimensions of quantum Weyl algebras},
   J. Algebra \textbf{216} (1999), 405--416.

\bibitem{FH}
   V. Futorny, J. T. Hartwig,
   \emph{Multiparameter twisted Weyl algebras},
   J. Algebra \textbf{357} (2012), 69--93.
   
\bibitem{Gad}
   J. Gaddis,
   \emph{Isomorphisms of some quantum spaces},
   to appear in Proc. 31st Ohio State-Denison Conf.,
   Preprint arXiv:1210.8413.

\bibitem{GiaZha1995}
   A. Giaquinto, J. J. Zhang,
   \emph{Quantum Weyl algebras},
   J. Algebra \textbf{176} (1995), 861--881.

\bibitem{GomKao2002}
   J. G\'omez-Torrecillas, L. EL Kaotit,
   \emph{The group of automorphisms of the coordinate ring of quantum symplectic space},
   Beitr. Algebra Geom. \textbf{43} (2002), 597--601.

\bibitem{Goodearl2000}
   K. R. Goodearl,
   \emph{Prime spectra of quantized coordinate rings},
   in Interactions between Ring Theory and Representations of Algebras (Murcia 1998)
   (F. Van Oystaeyen and M. Saor\'\i n, eds.),
   Lecture Notes in Pure and Applied Math. 210,
   Marcel Dekker, New York, 2000, pp.~205--238.

\bibitem{GLen}
    K. R. Goodearl and T. H. Lenagan,
    \emph{Catenarity in quantum algebras},
    J. Pure Appl. Algebra \textbf{111} (1996), 123--142.

\bibitem{GooLet2000}
    K. R. Goodearl, E. S. Letzter,
    \emph{The Dixmier-Moeglin equivalence in quantum coordinate rings and quantized Weyl algebras},
    Trans. Amer. Math. Soc. \textbf{352} (2000), 1381--1403.

\bibitem{Hartwig2006}
    J. T. Hartwig,
    \emph{Locally finite simple weight modules over twisted generalized Weyl algebras},
    J. Algebra \textbf{303} (2006), 42--76.

\bibitem{Jordan1995}
   D. A. Jordan,
   \emph{A simple localization of the quantized Weyl algebra},
   J. Algebra \textbf{174} (1995), 267--281.

\bibitem{Joseph1995}
   A. Joseph,
   \emph{Quantum Groups and Their Primitive Ideals},
   Springer-Verlag, Berlin 1995.

\bibitem{KL}
   G. R. Krause and T. H. Lenagan,
   \emph{Growth of Algebras and Gelfand-Kirillov Dimension}, Rev. Ed.,
   Graduate Studies in Math. 22,
   Providence (2000) Amer. Math. Soc.

\bibitem{M}
   G. Maltsiniotis,
   \emph{Groupes quantiques et structures diff\'erentielles},
   C. R. Acad. Sci. Paris S\'er. I Math. \textbf{311} (1990), 831--834.

\bibitem{MazTur2002}
   V. Mazorchuk, L. Turowska,
   \emph{$\ast$-representations of twisted generalized Weyl constructions},
   Algebr. Represent. Theory \textbf{5} (2002), 163--186.
   
\bibitem{McRob}
   J.C. McConnell and J.C. Robson, 
   \emph{Noncommutative Noetherian Rings}, 
   Chichester (1987) Wiley.

\bibitem{Ra}
   M. O. Rabin,
   \emph{Recursive unsolvability of group theoretic problems},
   Ann. Math. \textbf{67} (1958), 172--194.

\bibitem{RS}
   L. Richard, A. Solotar,
   \emph{Isomorphisms between quantum generalized Weyl algebras},
   J. Algebra Appl. \textbf{5} (2006), 271--285.

\bibitem{R} L. Rigal,
   \emph{Spectre de l'algb\`{e}bre de Weyl quantique},
   Beitr. Algebra Geom. \textbf{37} (1996), 119--148.

\bibitem{SheUmi2003}
   I. P. Shestakov, U. U. Umirbaev,
   \emph{The tame and the wild automorphisms of polynomial rings in three variables},
   J. Amer. Math. Soc. \textbf{17} (2003), 197--227.
   
\bibitem{Smith1992}
   S. P. Smith,
   \emph{Quantum groups: an introduction and survey for ring theorists},
   in Noncommutative Rings (Berkeley, CA, 1989) (S. Montgomery and L. Small, eds.),
    Math. Sci. Res. Inst. Publ. 24, 
    Springer, New York, 1992, pp.~131--178.

\bibitem{Yakimov2012a} M. Yakimov,
   \emph{The Launois-Lenagan conjecture},
   J. Algebra \textbf{392} (2013), 1--9.

\bibitem{Yakimov2012b} M. Yakimov,
   \emph{Rigidity of quantum tori and the Andruskiewitsch-Dumas conjecture},
   Preprint arXiv:1204.3218.

\end{thebibliography}
\end{document}